\documentclass[11pt]{amsart}%
\usepackage{amsfonts}
\usepackage{graphicx}
\usepackage{amscd}
\usepackage{amsmath}%
\setcounter{MaxMatrixCols}{30}%
\usepackage{amssymb}

\theoremstyle{plain}
\newtheorem{theorem}{Theorem}[section]

\newtheorem{corollary}[theorem]{Corollary}
\newtheorem{proposition}[theorem]{Proposition}

\theoremstyle{definition}

\newtheorem{remark}[theorem]{Remark}
\newtheorem{example}[theorem]{Example}

\numberwithin{equation}{section}

\begin{document}

 \title[Regularity properties through sequences]{Regularity properties of distributions through sequences of functions}
 \author{Stevan Pilipovi\'c}
 \address{Department of Mathematics and Informatics\\ University of Novi Sad\\ Trg Dositeja Obradovi\'ca 4\\ Novi Sad\\ Serbia.}
 \email{stevan.pilipovic@dmi.uns.ac.rs}
 \author{Dimitris Scarpal\'{e}zos}
 \address{Centre de Math\'{e}matiques de Jussieu\\ Universit\'{e} Paris 7 Denis Diderot\\ Case Postale 7012\\ 2, place Jussieu\\ F-75251 Paris Cedex 05\\ France.}
\email{scarpa@math.jussieu.fr}
 \author{Jasson Vindas}
 \address{Department of Mathematics\\ Ghent University\\ Krijgslaan 281 Gebouw S22\\ B-9000 Gent\\ Belgium.}
\email{jvindas@cage.Ugent.be}
 
\thanks{The research  of S. Pilipovi\'{c} is supported by the Serbian Ministry
of Education and Science, through 
project number 174024}
\thanks{J. Vindas gratefully acknowledges support by a Postdoctoral Fellowship of the Research Foundation--Flanders (FWO, Belgium)}      
  
\subjclass[2000]{Primary 26A16, 26B35, 46F10, 46F30. Secondary 26A12, 46E15, 46F05}
\keywords{Regularity of Schwartz distributions, H\"{o}lder continuity, smoothness, sequences of smooth functions, regularizations, H\"{o}lder-Zygmund spaces, generalized functions}
                 
\begin{abstract}
We give necessary and sufficient criteria for a distribution to be smooth or uniformly H\"{o}lder continuous in terms of approximation sequences by smooth functions; in particular, in terms of those arising as regularizations $(T\ast\phi_{n})$.
\end{abstract}

\maketitle

\section{Introduction}\label{sec0}
In this article we provide necessary and sufficient criteria for a distribution to be smooth or have a H\"{o}lderian type regularity. We shall substantially refine and improve earlier results from \cite{her,o,ps}. 

One of the oldest and most useful procedures in analysis is that of \emph{regularization}. It gives a way to study functions and distributions by means of approximations by sequences of regular functions. The procedure has a remarkable importance in the understanding of generalized functions; for instance, it is the essence of the sequential approach to distribution theory \cite{a-m-s,kordist}. It also plays a fundamental role for the theory of generalized function algebras \cite{col84,d-h-p-v,gv,gkos}. The algebras of generalized functions are usually constructed \cite{col84,d-h-p-v,gkos} as quotient algebras of sequences (or nets) of smooth functions. The distributions are then embedded, via regularization, as equivalence classes of sequences. One of the most standard and critical issues in these theories is to find out whether a given generalized function is actually a classical ``smooth'' function in terms of its representative sequences. For example, such a natural question arises when solving (singular) PDE \cite{her,o}. The question has also great interest from the point of view of distribution theory. 

This article is motivated by this general question, and we will provide some answers for H\"{o}lder-Zygmund type and $C^{\infty}$ regularity. Our aim is to describe such regularity properties in terms of growth properties of approximation sequences; in particular, via regularization sequences. We state some samples of our results. Their sharp versions will be the subject of this paper. 

Throughout the article, we use the notation $(\phi_{n})=(\phi_{n})_{n\in\mathbb{N}}$ for a special $\delta$-sequence, also called a sequence of mollifiers, that is,
$\phi_{n}(x)=n^{d}\phi(nx)$, where $\phi\in\mathcal{S}(\mathbb{R}^{d})$ satisfies
$\int_{\mathbb{R}^{d}}
\phi(x)dx=1,$
so that $(\phi_{n})$ is an approximation of the unity. We denote as $C^{\alpha}(\mathbb{R}^{d})$ the global H\"{o}lder space of exponent $\alpha$ \cite{hore,meyer,p-r-v,triebel2006}. The next proposition is a corollary of our Theorem \ref{m2theorem}.

\begin{proposition} 
\label{prop1}
Let $T$ be a distribution with compact support on an open set $\Omega\subset\mathbb{R}^{d}$ and let $(T_{n})$ be a regularization sequence, namely, $T_{n}=T\ast \phi_{n}$. Let $\alpha\in\mathbb{R}_{+}\setminus\mathbb{N}$ and fix an integer $k>\alpha$. A necessary and sufficient condition for $T\in C^{\alpha}(\mathbb{R}^{d})$ is 
\begin{equation}
\label{ireq1}
(\forall m \in {\mathbb
N}^d,|m|\leq k)(\sup_{x\in\Omega}|\partial^{m}T_{n}(x)|=O(n^{k-\alpha})).
\end{equation}
\end{proposition}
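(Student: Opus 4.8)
The proof splits into necessity, which is a routine mollifier estimate, and sufficiency, which carries the real content.

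\textit{Necessity.} Write $\alpha=\ell+\beta$ with $\ell\in\mathbb N_{0}$, $0<\beta<1$, so that $T\in C^{\alpha}(\mathbb R^{d})$ means $\partial^{j}T\in L^{\infty}$ for $|j|\le\ell$ and $[\partial^{j}T]_{C^{\beta}}<\infty$ for $|j|=\ell$. For $|m|\le\ell$ one has $\partial^{m}T_{n}=(\partial^{m}T)\ast\phi_{n}$, hence $\|\partial^{m}T_{n}\|_{\infty}\le\|\partial^{m}T\|_{\infty}\|\phi\|_{L^{1}}=O(1)=O(n^{k-\alpha})$. For $\ell<|m|\le k$ split $m=j+p$ with $|j|=\ell$, $|p|\ge1$; since $\int\partial^{p}\phi=0$,
\[
\partial^{m}T_{n}(x)=n^{|p|}\int_{\mathbb R^{d}}\bigl(\partial^{j}T(x-y/n)-\partial^{j}T(x)\bigr)(\partial^{p}\phi)(y)\,dy,
\]
and bounding the increment by $[\partial^{j}T]_{C^{\beta}}\,n^{-\beta}|y|^{\beta}$ (with $\int|y|^{\beta}|\partial^{p}\phi(y)|\,dy<\infty$ as $\phi\in\mathcal S$) gives $\|\partial^{m}T_{n}\|_{\infty}\le Cn^{|m|-\alpha}\le Cn^{k-\alpha}$.

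\textit{Sufficiency.} Assume \eqref{ireq1}. I first upgrade it to a bound on all of $\mathbb R^{d}$. Since $T$ has compact support contained in $\Omega$, fix $\delta>0$ with $K_{2\delta}:=\{x:\operatorname{dist}(x,\operatorname{supp}T)\le2\delta\}\subset\Omega$. For $x\notin K_{2\delta}$, writing $\partial^{m}T_{n}(x)=\langle T_{y},(\partial^{m}\phi_{n})(x-y)\rangle$ and using that $T$ has finite order together with $|n(x-y)|\ge n\delta$ for $y$ in a fixed compact neighbourhood of $\operatorname{supp}T$, the Schwartz decay of $\phi$ gives $|\partial^{m}T_{n}(x)|=O_{M}(n^{-M})$ for every $M$, uniformly off $K_{2\delta}$. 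As $\mathbb R^{d}\setminus\Omega\subset\mathbb R^{d}\setminus K_{2\delta}$ and $n^{-M}=O(n^{k-\alpha})$ (since $k>\alpha$, $n\ge1$), we obtain $\|\partial^{m}T_{n}\|_{L^{\infty}(\mathbb R^{d})}=O(n^{k-\alpha})$ for all $|m|\le k$.

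Now fix a Littlewood--Paley family $\mathrm{Id}=\Delta_{-1}+\sum_{q\ge0}\Delta_{q}$ with $\widehat{\Delta_{q}u}(\xi)=\theta(2^{-q}\xi)\hat u(\xi)$, $\theta\in\mathcal D$ supported in an annulus, and $\widehat{\Delta_{-1}u}=\chi\hat u$, $\chi\in\mathcal D$ with $\chi\equiv1$ near $0$; recall that, as $\alpha\notin\mathbb N$, $C^{\alpha}(\mathbb R^{d})=B^{\alpha}_{\infty,\infty}(\mathbb R^{d})$, characterized by $\sup_{q\ge-1}2^{q\alpha}\|\Delta_{q}T\|_{\infty}<\infty$. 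The term $\Delta_{-1}T=T\ast\check\chi$ is bounded since $T\in\mathcal E'$ and $\check\chi\in\mathcal S$. For $q\ge0$, exploit $\hat\phi(0)=\int\phi=1$: choose an integer $L$, independent of $q$, so large that $\hat\phi$ stays bounded away from $0$ on $2^{-L}\operatorname{supp}\theta$; then $g_{L}:=\theta/\hat\phi(2^{-L}\cdot)\in\mathcal D$, and since $\widehat{T_{n}}(\xi)=\hat\phi(\xi/n)\hat T(\xi)$ one checks the identity
\[
\Delta_{q}T=T_{2^{q+L}}\ast K_{q},\qquad K_{q}(x)=2^{qd}k_{L}(2^{q}x),\quad k_{L}:=\mathcal F^{-1}g_{L}\in\mathcal S.
\]
Because $\widehat{\Delta_{q}T}$ is supported in $\{|\xi|\sim2^{q}\}$, the reverse Bernstein inequality gives $\|\Delta_{q}T\|_{\infty}\lesssim 2^{-qk}\max_{|m|=k}\|\partial^{m}\Delta_{q}T\|_{\infty}$; since $\partial^{m}\Delta_{q}T=(\partial^{m}T_{2^{q+L}})\ast K_{q}$ and $\|k_{L}\|_{L^{1}}<\infty$, the global bound with $n=2^{q+L}$ yields
\[
\|\Delta_{q}T\|_{\infty}\lesssim 2^{-qk}\max_{|m|=k}\|\partial^{m}T_{2^{q+L}}\|_{\infty}\lesssim 2^{-qk}(2^{q+L})^{k-\alpha}\lesssim 2^{-q\alpha},
\]
uniformly in $q$. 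Hence $\sup_{q\ge-1}2^{q\alpha}\|\Delta_{q}T\|_{\infty}<\infty$, i.e. $T\in C^{\alpha}(\mathbb R^{d})$.

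The delicate point is the very last step. The case $|m|=0$ of \eqref{ireq1} alone would only place $T$ in $B^{-(k-\alpha)}_{\infty,\infty}$; the gain of regularity comes entirely from inserting the top-order ($|m|=k$) estimates into the reverse Bernstein inequality on the frequency-localized pieces of $T$, which can be reached from the regularizations $T_{n}$ precisely because $\hat\phi$ does not vanish near the origin. The preliminary passage from $\Omega$ to $\mathbb R^{d}$ and the necessity estimates are routine by comparison.
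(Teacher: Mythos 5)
Your proof is correct. It differs from the paper's route in an instructive way: the paper does not prove Proposition \ref{prop1} directly, but derives it from the general Zygmund-scale statement (Theorem \ref{m2theorem} with $s=k-\alpha$, i.e.\ Corollary \ref{m2cor}) together with the identification $C^{\alpha}_{\ast}(\mathbb{R}^{d})=C^{\alpha}(\mathbb{R}^{d})$ for $\alpha\in\mathbb{R}_{+}\setminus\mathbb{N}$. You share with that proof the preliminary step of upgrading \eqref{ireq1} to a bound on all of $\mathbb{R}^{d}$ via the compact support, the finite order of $T$ and the rapid decay of $\phi$ (your $O(n^{-M})$ off a neighbourhood of $\operatorname{supp}T$ is even stronger than the $O(1)$ the paper settles for), the diagonalization $n\sim 2^{q}$, and the crucial use of $\hat{\phi}(0)=1$, i.e.\ non-vanishing of $\hat\phi$ near the origin. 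The mechanisms then diverge: for sufficiency the paper observes that $n^{-(k-\alpha)}(T\ast\phi_{n})$ is bounded in $C^{k}_{b}(\mathbb{R}^{d})\hookrightarrow C^{k}_{\ast}(\mathbb{R}^{d})$ and exploits the flexibility of the characterization \eqref{zeq} by replacing the analyzing family $\theta_{2^{j}}$ with $(\phi\ast\theta)_{2^{j}}$ (multiplication by $\hat\phi$), which is admissible for $C^{k-\alpha}_{\ast}$; you instead divide by $\hat\phi$ on dyadic annuli to get $\Delta_{q}T=T_{2^{q+L}}\ast K_{q}$ and then trade the top-order derivative bounds for decay via the reverse Bernstein inequality for annulus-supported spectra. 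For necessity the paper uses $\partial^{m}\colon C^{\beta}_{\ast}\to C^{\beta-|m|}_{\ast}$ together with a lemma from \cite{p-r-v}, whereas you give an elementary first-principles mollifier estimate based on $\int\partial^{p}\phi=0$ and the H\"older seminorm (which in fact gives the sharper exponent $n^{|m|-\alpha}$). What each buys: your argument is self-contained modulo two standard facts ($C^{\alpha}=B^{\alpha}_{\infty,\infty}$ for positive non-integer $\alpha$ and reverse Bernstein) and targets exactly the H\"older statement; the paper's argument, at essentially the same cost, proves the two-sided equivalence on the whole Zygmund scale for arbitrary real exponents, which is then reused (e.g.\ in Theorem \ref{regas} and Remarks \ref{remark1}--\ref{remark2}), something your Besov-space formulation could also deliver but only after minor adjustments for $\alpha\le 0$ and integer $\alpha$.
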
 

Naturally, we may have used in (\ref{ireq1}) the minimal value $k=[\alpha]+1$. However, the possibility of using different values for $k$ leads to interesting consequences. For instance, using Proposition \ref{prop1}, one easily recovers the ensuing characterization of distributions that are smooth functions, originally due to Oberguggenberger and so useful in the regularity analysis of generalized solutions to partial differential  equations \cite{o}.

\begin{corollary} 
\label{ic1}
Let $T$ be a distribution with compact support on an open set $\Omega$ and let $(T_{n})$ be a regularization sequence. The distribution is smooth, that is, $T\in C^{\infty}(\Omega)$, if and only if there exists $s>0$ such that  
$$
(\forall m \in {\mathbb
N}^d)(\sup_{x\in\Omega}|\partial^{m}T_{n}(x)|=O(n^{s})).
$$
\end{corollary}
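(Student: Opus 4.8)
The plan is to establish the two implications separately, reducing the nontrivial one to Proposition~\ref{prop1} by exploiting the freedom in the choice of the integer $k$ there.

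For necessity, assume $T\in C^{\infty}(\Omega)$. Since $\operatorname{supp}T$ is a compact subset of the open set $\Omega$, extending $T$ by zero yields $T\in\mathcal{D}(\mathbb{R}^{d})$, and $\partial^{m}T_{n}=(\partial^{m}T)\ast\phi_{n}$. As $\partial^{m}T$ is bounded and $\|\phi_{n}\|_{L^{1}}=\|\phi\|_{L^{1}(\mathbb{R}^{d})}$ for every $n$, one immediately gets
\[
\sup_{x\in\Omega}|\partial^{m}T_{n}(x)|\leq\|\partial^{m}T\|_{L^{\infty}}\,\|\phi\|_{L^{1}(\mathbb{R}^{d})}=O(1)
\]
for each multi-index $m$; in particular the stated condition holds with, say, $s=1$.

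For sufficiency, suppose there is an $s>0$ such that $\sup_{x\in\Omega}|\partial^{m}T_{n}(x)|=O(n^{s})$ for all $m$. Fix an arbitrary $\alpha\in\mathbb{R}_{+}\setminus\mathbb{N}$ and pick an integer $k$ with $k\geq\alpha+s$ (which in particular forces $k>\alpha$). Then, for every $m$ with $|m|\leq k$, the hypothesis gives $\sup_{x\in\Omega}|\partial^{m}T_{n}(x)|=O(n^{s})=O(n^{k-\alpha})$ since $k-\alpha\geq s$, so condition (\ref{ireq1}) is satisfied. By Proposition~\ref{prop1}, $T\in C^{\alpha}(\mathbb{R}^{d})$. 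As $\alpha$ may be chosen arbitrarily large, $T$ lies in $C^{\alpha}(\mathbb{R}^{d})$ for every non-integer $\alpha>0$, hence in $C^{k}(\mathbb{R}^{d})$ for every $k\in\mathbb{N}$, and therefore $T\in C^{\infty}(\mathbb{R}^{d})$; a fortiori $T\in C^{\infty}(\Omega)$.

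There is essentially no hard step: the substance lies entirely in Proposition~\ref{prop1}, and the only idea needed here is that the latter permits $k$ to be taken strictly larger than the minimal value $[\alpha]+1$, which is precisely what allows a single polynomial bound $O(n^{s})$ to feed the H\"{o}lder criterion simultaneously for all $\alpha$. (Alternatively one could argue directly that such uniform polynomial growth of all derivatives of $T_{n}$ forces $\widehat{T}$ to be rapidly decreasing, but the reduction above is shorter and is the intended route.)
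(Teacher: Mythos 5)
Your proof is correct and follows essentially the same route as the paper: the nontrivial direction is reduced to Proposition~\ref{prop1} by exploiting the freedom to take $k$ large (you fix $\alpha$ and choose $k\geq\alpha+s$, while the paper fixes $k$ and sets $\alpha=k-s$ with $s\notin\mathbb{N}$ assumed without loss of generality — a purely cosmetic difference). Your explicit treatment of the easy necessity direction via Young's inequality is fine; the paper simply omits it.
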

\begin{proof} We may assume that $s\notin\mathbb{N}$. Given any $k>0$, write $\alpha=k-s$. Proposition \ref{prop1} yields $T\in C^{k-s}(\mathbb{R}^{d})$. Since this can be done for all $k$, we obtain $f\in C^{\infty}(\mathbb{R}^{d})$. 
\end{proof}

The plan of the article is as follows. We recall in Section \ref{h-zspaces} some well known facts about H\"{o}lder-Zygmund spaces. In Section \ref{sequences} we make comments about convergence rate and growth order of approximation sequences of distributions by functions, both concepts will play an essential role in the rest of the article. We give our main results in Section \ref{regseq}. In Subsection \ref{h-zreg} we present general versions of Proposition \ref{prop1}, which characterize local and global H\"{o}lder-Zygmund regularity of distributions. Subsection \ref{regs} deals with a criterion for smoothness, it extends the one given in Corollary \ref{ic1}. Finally, we discuss other related sufficient conditions for regularity in Subsection \ref{otherreg}.

\subsection{Notation\label{notation}}
We denote by $\Omega$ an open subset of $\mathbb R^d$. We write $\omega\subset\subset\Omega$ if $\omega$ has compact closure contained in $\Omega$. The integral part of $\alpha\in\mathbb{R}$ is denoted as $[\alpha]$. The Schwartz spaces of test functions $\mathcal{D}(\Omega)$, $\mathcal{E}(\Omega)\left(=C^{\infty}(\Omega)\right)$, $\mathcal{S}(\mathbb{R}^{d})$, and their corresponding duals, the spaces of distributions $\mathcal{D}'(\Omega)$, $\mathcal{E}'(\Omega)$, $\mathcal{S}'(\mathbb{R}^{d})$, are well known. The space of $r$-times continuously differentiable functions on $\Omega$ is denoted as $\mathcal{E}^{r}(\Omega)$ (or sometimes simply as $C^{r}(\Omega))$. As in the Introduction, we fix a $\delta$-sequence $(\phi_{n})$, where the test function $\phi\in\mathcal{S}(\mathbb{R}^{d})$ satisfies $\int_{\mathbb{R}^{d}}\phi(x)dx=1$. 

\section{H\"{o}lder-Zygmund spaces}
\label{h-zspaces}
We will measure the regularity of distributions with respect to H\"{o}lder-Zygmund spaces. For the reader's convenience, we collect in this section some background material about these spaces. We start with local H\"{o}lder spaces. Let $\alpha\in\mathbb{R}_{+}\setminus \mathbb{N}$, we say that $f\in C^{\alpha}_{loc}(\Omega)$ if $f\in \mathcal{E}^{[\alpha]}(\Omega)$ and for any $\omega\subset\subset \Omega$,
\begin{equation}
\label{eqH}
\max_{\left|m\right|=[\alpha]}\underset{x\neq t}{\sup_{x,t\in\omega}}\frac{\left|\partial^{m}f(x)-\partial^{m}f(t)\right|}{\left|x-y\right|^{\alpha-[\alpha]}}<\infty.
\end{equation}
The global H\"{o}lder space $C^{\alpha}(\mathbb{R}^{d})$ is defined \cite{hore} by requiring (\ref{eqH}) for $\omega=\mathbb{R}^{d}$ and additionally that $\partial^{m}f\in L^{\infty}(\mathbb{R}^{d})$ for $|m|\leq [\alpha]$.

There are several ways to introduce the global Zygmund space $C^{\alpha}_{\ast}(\mathbb{R}^{d})$ \cite{hore,meyer,triebel2006}. When $\alpha\in\mathbb{R}_{+}\setminus \mathbb{N}$, we have the equality $C^{\alpha}_{\ast}(\mathbb{R}^{d})=C^{\alpha}(\mathbb{R}^{d})$, but the Zygmund spaces are actually defined for all $\alpha\in\mathbb{R}$. They are usually introduced via either a dyadic Littlewood-Paley resolution \cite{triebel2006} or a continuous Littlewood-Paley decomposition of the unity \cite{hore}. However, we shall need a more flexible definition. We follow the approach proposed in \cite[p. 7, Thm. 1.7]{triebel2006} (a continuous version of it is discussed in \cite{p-r-v}).

Let $\alpha\in\mathbb{R}$ and $\varepsilon>0$. We consider two test functions $\theta_1,\theta\in\mathcal{S}(\mathbb{R}^{d})$ with the following compatibility conditions:
\begin{equation}
\label{lpcond1}
|\hat{\theta}_1(u)|>0 \ \mbox{ for } \  \left|u\right|\leq 2\varepsilon,
\end{equation}
\begin{equation}
\label{lpcond2}
|\hat {\theta}(u)|>0  \: \mbox{ for } \:  \varepsilon/2\leq\left|u\right|\leq 2\varepsilon \ \ \mbox{ and } \ \  \int_{\mathbb{R}^{d}}x^{m}\theta(x)dx=0  \: \mbox{ for } \: \left|m\right|\leq[\alpha].
\end{equation} 
When $\alpha<0$, the vanishing requirement over the moments is dropped. We further consider the sequence $(\theta_{2^{j}})_{j\in\mathbb{N}}$ given by
\begin{equation}
\label{lpcond3}
\theta_{2^{0}}=\theta_1 \ \ \ \mbox{while}\ \ \ \theta_{2^{j}}(x)= 2^{jd}\theta(2^{j}x) \ \ \mbox{ for }j\geq 1.
\end{equation}
Then, $C^{\alpha}_{\ast}(\mathbb{R}^{d})$ is the space of all distributions $T\in\mathcal{S}'(\mathbb{R}^{d})$ satisfying:
\begin{equation}
\label{zeq} \left\|T\right\|_{C^{\alpha}_{\ast}(\mathbb{R}^{d})}:=\sup_{x\in\mathbb{R}^{d}, 0\leq j}2^{\alpha j}\left|(T\ast \theta_{2^{j}})(x)\right|<\infty.
\end{equation}
The definition and the norm (\ref{zeq}) are independent of the choice of the sequence as long as (\ref{lpcond1}), (\ref{lpcond2}), and (\ref{lpcond3}) hold. A distribution $T\in\mathcal{D}'(\Omega)$ is then said to belong to $C^{\alpha}_{\ast,loc}(\Omega)$ if for all $\rho\in\mathcal{D}(\Omega)$ we have $\rho T\in C^{\alpha}_{\ast}(\mathbb{R}^{d})$. Clearly, $C^{\alpha}_{\ast,loc}(\Omega)=C^{\alpha}_{loc}(\Omega)$ whenever $\alpha\in\mathbb{R}_{+}\setminus\mathbb{N}$.  

\section{Sequences of smooth functions \label{sequences} }
Our goal in the next section is to describe the regularity of a distribution in terms of approximations to it through sequences of functions. Such regularity properties will depend on two crucial issues: the \emph{rate of convergence} of the approximation and the \emph{growth order} of the sequence with respect to  $n$. We now explain these two concepts.

\subsection{Approximation of distributions via sequences}
Let $T\in \mathcal{D}'(\Omega)$. We shall say that a sequence of locally integrable functions $(f_{n})$ on $\Omega$ is \emph{associated} to the distribution $T$ if $\lim_{n\to\infty}f_{n}=T$ in the weak topology of $\mathcal{D}'(\Omega)$, that is,
\begin{equation}
\label{eqnet1}
(\forall \rho \in \mathcal{D}(\Omega))(\langle T-f_n, \rho
\rangle=o(1),\ n\to\infty).
\end{equation}
In many cases, the rate of approximation in (\ref{eqnet1}) may be much better than just $o(1)$. It is important to keep track of this information. Let $R:\mathbb{N}\to \mathbb{R}_{+}$ be a function such that $R(n)=o(1),$ $n\to\infty.$ We write
$$
T-f_{n}=O(R(n)) \ \mbox{  in } \mathcal{D}'(\Omega)
$$
if 
\begin{equation}
\label{eqnet2}
(\forall \rho \in \mathcal{D}(\Omega))(\langle T-f_n, \rho
\rangle=O(R(n)),\ n\to\infty).
\end{equation}
\begin{example}
\label{ex1} Let $T\in\mathcal{E}'(\Omega)$. Consider the regularization sequence $T_{n}=\left(T\ast \phi_{n}\right)_{|\Omega}$. Then, we have the approximation  $T-T_{n}=O(n^{-b})$ in $\mathcal{D}'(\Omega)$, for any $b\in(0,1]$. It is possible to improve the rate of convergence in this approximation formula by imposing vanishing conditions on the higher order moments of $\phi$. It is not difficult to prove that the assumption $\int_{\mathbb{R}^{d}}t^{m}\phi(x)dx=0$ for each multi-index $1\leq\left|m\right|\leq k$, where $k\in\mathbb{N}$, yields the better approximation rate $T-T_{n}=O(n^{-b})$ in $\mathcal{D}'(\Omega)$ for any $b\in(0,k+1]$.
\end{example}
\begin{example}
\label{ex2} Given a distribution $T$ and a positive function $R$ as above, we can always construct an associated sequence $(f_{n})$ of smooth functions that approximates $T$ as in (\ref{eqnet2}). Let us suppose first that $T\in\mathcal{E}'(\Omega)$. Then $f_{n}=\left(T\ast \phi_{R(n)}\right)_{|\Omega}$,  where $\phi_{R(n)}(x)=(1/R(n))^{d}\phi(x/R(n))$, satisfies (\ref{eqnet2}). The general case $T\in\mathcal{D}'(\Omega)$ follows from a standard partition of the unity argument.
\end{example}

\subsection{Growth of sequences}
We are interested in sequences $(f_n)
$ of $C^{k}$ functions on $\Omega$ for which there exists $s$ such that
\begin{equation}
\label{eqnetg1}
(\forall \omega\subset \subset \Omega)(\forall m \in {\mathbb
N}^d,|m|\leq k)(\sup_{x\in\omega}|\partial^{m}f_n(x)|=O(n^{
s}),
\ n\to\infty).
\end{equation}
If (\ref{eqnetg1}) holds we say that the sequence is of class $(k,s)$; furthermore, we denote as $\mathcal{E}^{k,s}_{\mathbb{N}}(\Omega)$ the set of all sequences of $C^{k}$ functions on $\Omega$ that are of class $(k,s)$. 
The notion makes sense for $k=\infty$, meaning that $(f_{n})$ is a sequence of $C^{\infty}$ functions and that (\ref{eqnetg1}) holds for all $k\in\mathbb{N}$. Observe $\mathcal{E}_{\mathbb{N}}^{k,s}(\Omega)\subseteq \mathcal{E}_{\mathbb{N}}^{k',s'}(\Omega)$ whenever $k'\leq k$ and $s\leq s'$. The intuitive idea behind this notation is to measure the regularity of the sequence in terms of the two parameters: As $k$ increases or $s$ decreases, the sequence becomes more ``regular''. We are particularly interested in the case $s>0$, because, otherwise $(f_{n})$ is associated to the zero distribution.

\section{Main results: regularity through sequences}
\label{regseq}

In other to motivate the results of this section, we start by giving the following standard result.

\begin{proposition} \label{rpropm}Let $T\in\mathcal{D}'(\Omega)$ and let $(f_n)$ be a sequence of $C^{k}$ functions associated to it. Assume that
\begin{equation}
\label{req1}
(\forall \omega\subset \subset \Omega)(\forall m \in {\mathbb
N}^d,|m|\leq k)(\sup_{x\in\omega}|\partial^{m}f_n(x)|=O(1)).
\end{equation}
Then $\partial ^{m}T\in L^{\infty}(\Omega)$ for all $|m|=k $. In particular, if $k\geq d$, then $T$ is a $C^{k-d}$ function on $\Omega$.
\end{proposition}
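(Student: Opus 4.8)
The plan is to show that each weak limit $\partial^m T$ (for $|m|=k$) can be identified with an $L^\infty$ function, and then to recover classical differentiability by integrating. First I would localize: fix $\omega\subset\subset\Omega$ and a cutoff-type argument is not even needed here since the bound \eqref{req1} is already stated on $\omega$. For each multi-index $m$ with $|m|=k$, the sequence $(\partial^m f_n)$ is bounded in $L^\infty(\omega)$ by hypothesis, hence bounded in $L^1(\omega)$, and therefore — viewing $L^\infty(\omega)$ as the dual of $L^1(\omega)$ — it has a weak-$*$ convergent subsequence with limit some $g_m\in L^\infty(\omega)$, with $\|g_m\|_{L^\infty(\omega)}$ controlled by the $O(1)$ constant. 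The key point is then to identify $g_m$ with $\partial^m T$ on $\omega$: since $f_n\to T$ in $\mathcal D'(\Omega)$, integration by parts gives $\langle \partial^m f_n,\rho\rangle=(-1)^{|m|}\langle f_n,\partial^m\rho\rangle\to(-1)^{|m|}\langle T,\partial^m\rho\rangle=\langle\partial^m T,\rho\rangle$ for every $\rho\in\mathcal D(\omega)$, while the weak-$*$ limit gives $\langle\partial^m f_n,\rho\rangle\to\int_\omega g_m\rho$ along the subsequence; hence $\partial^m T=g_m\in L^\infty(\omega)$. Since $\omega$ was arbitrary, $\partial^m T\in L^\infty_{loc}(\Omega)$, and the uniform constant in \eqref{req1} shows it is in fact in $L^\infty(\Omega)$.

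For the second assertion, suppose $k\geq d$. I would argue that all distributional partials of $T$ of order $k$ lie in $L^\infty_{loc}\subset L^p_{loc}$ for every $p<\infty$; in particular $T\in W^{k,p}_{loc}(\Omega)$ for any such $p$. Choosing $p>d$ and invoking the Sobolev embedding $W^{k,p}_{loc}\hookrightarrow C^{k-1}_{loc}$ when $p>d$ — or, more efficiently, iterating the one-dimensional fact that a function whose distributional derivative is bounded is Lipschitz — one upgrades $T$ to a genuinely $C^{k-1}$ function. To land at $C^{k-d}$ as stated, I would instead use that $\partial^\beta T\in L^\infty_{loc}$ for all $|\beta|=k$ forces, by the same Sobolev-type reasoning applied to the lower derivatives $\partial^\gamma T$ with $|\gamma|=k-d$, that each such $\partial^\gamma T$ lies in $W^{d,p}_{loc}$ with $p>d$, hence is continuous; thus $T\in C^{k-d}(\Omega)$. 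Alternatively one can bypass Sobolev embeddings entirely and use the integral representation of a function in terms of its $d$-th order mixed partial $\partial_1\cdots\partial_d$, which is bounded, to get continuity directly.

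The main obstacle I anticipate is purely bookkeeping rather than conceptual: the weak-$*$ compactness step produces the limit only along a subsequence, so one must argue that the limit is independent of the subsequence — which is automatic here because $\partial^m T$ is already determined by $T$ as a distribution, so every convergent subsequence has the same limit $\partial^m T$, and hence the full sequence $(\partial^m f_n)$ converges weak-$*$ to it. A secondary point of care is the passage from ``all top-order distributional derivatives are bounded'' to the precise loss of $d$ derivatives in the continuity conclusion; the cleanest route is the explicit Newton--Leibniz / fundamental-theorem-of-calculus representation in each coordinate, applied $d$ times, which turns a bounded $k$-th derivative into a continuous $(k-d)$-th derivative without invoking the full strength of Sobolev embedding theorems. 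I would present that elementary argument to keep the proof self-contained.
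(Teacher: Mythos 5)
Your proof is correct and follows essentially the same route as the paper: the bound gives weak-$*$ precompactness of $(\partial^{m}f_{n})$ in $L^{\infty}(\omega)$, and the distributional convergence $f_{n}\to T$ identifies every weak-$*$ limit point with $\partial^{m}T$, after which the $C^{k-d}$ assertion follows by the standard Sobolev-embedding or iterated fundamental-theorem-of-calculus argument you sketch (which in fact yields the stronger $C^{k-1}$ regularity) --- precisely the details the paper leaves implicit. One small caveat: the $O(1)$ constants in the hypothesis depend on $\omega$, so the argument really gives $\partial^{m}T\in L^{\infty}_{loc}(\Omega)$ rather than a uniform global bound; this imprecision is already present in the proposition's statement and is not cured by your appeal to a ``uniform constant''.
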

\begin{proof} The relation (\ref{req1}) gives that, for each $\left|m\right|\leq k$ and $\omega\subset\subset \Omega$, the sequence $((\partial^{m}f_{n})_{|\omega})$ is weakly$^{\ast}$ precompact in $L^{\infty}(\omega)$. The rest is implied by the distributional convergence of $(\partial^{m}f_{n})$ to $\partial^{m}T$.
\end{proof}

Our aim is to weaken the growth constrains in (\ref{req1}), but in such way that one is still able to draw regularity conclusions about the distribution. In order to so, one has to compensate by strengthening the rate of convergence of the sequence. Our three main results go into that direction. Theorem \ref{m2theorem} characterizes H\"{o}lder-Zygmund regularity. Theorem \ref{regas} gives a criterion for smoothness that greatly improves Corollary \ref{ic1} from the Introduction. Finally, Theorem \ref{mtheorem} provides other sufficient conditions for H\"{o}lder-Zygmund regularity.

\subsection{Characterization of H\"{o}lder-Zygmund regularity\label{h-zreg}} We now characterize those compactly supported distributions that belong to a Zygmund space. In this subsection, we restrict our attention to regularization sequences $(T\ast \phi_{n})$. Extensions of the following theorem are indicated in Remarks \ref{remark1} and \ref{remark2} below, where we relax the support assumption and obtain characterizations of $C^{\alpha}_{\ast}(\mathbb{R}^{d})$ and $C^{\alpha}_{\ast,loc}(\Omega)$.

\begin{theorem}
\label{m2theorem} Let $T\in\mathcal{E}'(\Omega)$, $s>0$, and let $(T\ast\phi_{n})$ be a regularization sequence. Then, 
$$((T\ast\phi_{n})_{|\Omega})\in \mathcal{E}^{k,s}_{\mathbb{N}}(\Omega) \Leftrightarrow T\in C_{\ast}^{k-s}(\mathbb{R}^{d}).$$
\end{theorem}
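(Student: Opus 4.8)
The plan is to prove both implications by reducing everything to the Littlewood--Paley characterization \eqref{zeq} of the Zygmund space, exploiting the fact that the mollifier dilates $\phi_n=n^d\phi(n\cdot)$ behave, up to the discretization $n\leftrightarrow 2^j$, like a continuous Littlewood--Paley family. Before starting I would record a harmless normalization: since $T$ has compact support in $\Omega$, fix a cutoff $\rho\in\mathcal D(\Omega)$ with $\rho\equiv 1$ near $\operatorname{supp}T$, so that on a slightly smaller neighborhood the restriction $(T\ast\phi_n)_{|\Omega}$ agrees (modulo a sequence that is $O(n^{-N})$ for every $N$ in every $C^k(\omega)$-norm, coming from the rapid decay of $\phi$ away from the origin) with something globally controlled on $\mathbb R^d$. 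This lets me pass freely between ``bounds on $\omega\subset\subset\Omega$'' and ``bounds on $\mathbb R^d$'' for the convolutions.

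For the implication ($\Leftarrow$), assume $T\in C^{k-s}_\ast(\mathbb R^d)$. I would split $\phi$ using a Littlewood--Paley partition adapted to exponent $\alpha:=k-s$, i.e. write $\phi_n = \sum_{j\geq 0}\phi_n\ast\theta_{2^j}$-type decomposition or, more directly, estimate $\partial^m(T\ast\phi_n)$ by inserting a resolution of the identity: $\partial^m(T\ast\phi_n)=\sum_j \partial^m((T\ast\theta_{2^j})\ast\psi_{j,n})$ where $\psi_{j,n}$ carries the remaining convolution factor. The key quantitative facts are: (i) $\|T\ast\theta_{2^j}\|_{L^\infty}\leq \|T\|_{C^\alpha_\ast}2^{-\alpha j}$ by \eqref{zeq}; (ii) each derivative $\partial^m$ falling on the scale-$2^j$ piece costs a factor $2^{j|m|}\leq 2^{jk}$; and (iii) the overlap between the scale $n$ (Fourier support $\sim n$) and the dyadic scale $2^j$ forces the effective sum to range over $2^j\lesssim n$, with the tails $2^j\gg n$ and $2^j\ll n$ contributing geometrically small remainders because of Fourier-support disjointness / moment cancellation of $\theta$ and the Schwartz decay of $\phi$. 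Summing the dominant range gives $\sup_{x}|\partial^m(T\ast\phi_n)(x)|\lesssim \sum_{2^j\lesssim n}2^{jk}2^{-\alpha j}=\sum_{2^j\lesssim n}2^{js}\lesssim n^s$, which is exactly membership in $\mathcal E^{k,s}_{\mathbb N}(\Omega)$.

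For the converse ($\Rightarrow$), assume $((T\ast\phi_n)_{|\Omega})\in\mathcal E^{k,s}_{\mathbb N}(\Omega)$; I must produce the bound $\sup_x 2^{(k-s)j}|(T\ast\theta_{2^j})(x)|<\infty$ for one admissible Littlewood--Paley family $(\theta_{2^j})$. Here I would choose $\theta$ so that $\hat\theta$ is supported in an annulus $\varepsilon/2\leq|u|\leq 2\varepsilon$ and, crucially, arrange a reproducing identity of the form $\theta_{2^j} = \theta_{2^j}\ast\eta_{2^j}$ where $\hat\eta_{2^j}(u)=\hat\phi(u/c2^j)^{-1}\hat{\tilde\theta}(u)$ is legitimate because $\hat\phi$ is bounded below on the relevant annulus (it equals $1$ at the origin, hence is nonzero on a ball, and rescaling $2^j$ puts the annulus of $\hat\theta$ inside that ball after dividing by $2^j$ — this is why the construction needs the annular support to sit where $\hat\phi\ne 0$). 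Then $T\ast\theta_{2^j}=(T\ast\phi_{c2^j})\ast(\text{a fixed-profile dilated kernel})$, so I can integrate by parts to move $k$ derivatives onto $T\ast\phi_{c2^j}$: since $\theta$ has vanishing moments up to order $[\alpha]$, the dilated kernel can be written as $2^{-jk}$ times $\partial^{|m|=k}$ of an $L^1$-bounded dilated kernel, giving $\|T\ast\theta_{2^j}\|_{L^\infty}\lesssim 2^{-jk}\max_{|m|=k}\|\partial^m(T\ast\phi_{c2^j})\|_{L^\infty}\lesssim 2^{-jk}(c2^j)^s\lesssim 2^{-(k-s)j}$, as required; the $j=0$ term $T\ast\theta_1$ is trivially bounded since $T\in\mathcal E'$. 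The main obstacle, and the step that requires the most care, is precisely this algebra of Fourier multipliers: one must verify that $\hat\phi$ stays bounded away from zero on the dyadic annuli after rescaling, that the resulting correction kernels $\eta_{2^j}$ (and their $k$-fold ``antiderivative'' kernels in the cancellation step) are uniformly Schwartz and have uniformly bounded $L^1$ norms under the scaling $x\mapsto 2^jx$, and that switching between the sequences indexed by $n\in\mathbb N$ and the sub-sequence $n=c2^j$ does not lose the estimate — the last point being clean because $\mathcal E^{k,s}_{\mathbb N}$ is defined with $O(n^s)$, which is monotone enough to survive passing to $c2^j$. I would isolate the multiplier/kernel construction as a preliminary lemma so that both directions can invoke it.
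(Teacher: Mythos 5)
Your proposal is correct in substance, but it reaches the theorem by a route that differs from the paper's, most markedly in the implication $(\Rightarrow)$. For $(\Leftarrow)$ the paper merely notes that $\partial^{m}T\in C^{-s}_{\ast}(\mathbb{R}^{d})$ for $|m|\leq k$ and quotes Lemma 5.3 of \cite{p-r-v}, whereas you reprove the estimate directly with a Calder\'{o}n-type reproducing decomposition; this is the same underlying mathematics, yours being self-contained while the paper's is modular (and, by working with the negative exponent $-s$, free of any moment-condition bookkeeping). For $(\Rightarrow)$ the paper argues quite differently: it observes that $g_{n}=n^{-s}(T\ast\phi_{n})$ is bounded in $C^{k}_{b}(\mathbb{R}^{d})\hookrightarrow C^{k}_{\ast}(\mathbb{R}^{d})$, applies the norm (\ref{zeq}) for the exponent $k$ with $\varepsilon$ chosen so that $\hat{\phi}\neq 0$ on $|u|\leq 2\varepsilon$, and then sets $n=2^{j}$ on the diagonal, so that $\phi_{2^{j}}\ast\theta_{2^{j}}=(\phi\ast\theta)_{2^{j}}$ is itself an admissible family for the exponent $k-s$; the conclusion follows from the independence of the characterization (\ref{lpcond1})--(\ref{zeq}) of the chosen family, with no kernel construction at all. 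You instead invert $\hat{\phi}$ on a small annulus to factor $\theta_{2^{j}}=\phi_{c2^{j}}\ast\eta_{2^{j}}$ (the identity as you typed it, $\theta_{2^{j}}=\theta_{2^{j}}\ast\eta_{2^{j}}$, is a slip, but the line that follows shows you mean the former), use the annular support of $\hat{\theta}$ to write $\eta_{2^{j}}$ as $2^{-jk}$ times $k$-th derivatives of dilated kernels with uniformly bounded $L^{1}$ norms, and transfer those derivatives onto $T\ast\phi_{c2^{j}}$. That works: the correction kernels are exact $2^{j}$-dilates of one fixed Schwartz function (so uniformity is automatic), all moments of your band-limited $\theta$ vanish (so extracting $k$ derivatives is legitimate even when $[k-s]<k-1$), and you only need the subsequence $n=c2^{j}$ with $c\in\mathbb{N}$. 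Both proofs rest on the same two structural facts---the flexibility of the Triebel-type characterization and $\hat{\phi}(0)=1$---but the paper's diagonal trick buys brevity and avoids multiplier inversion, while your version makes the mechanism (trading derivatives for the gain $2^{-jk}$ via moment cancellation) explicit and keeps the argument self-contained; your preliminary reduction from bounds on $\omega\subset\subset\Omega$ to global bounds, via the rapid decay of $\phi$ away from $\operatorname{supp} T$, is essentially the paper's own first step, though you should state it with the same care (it is exactly the estimate (\ref{eqextra})).
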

\begin{proof}
Observe \cite{hore} that the partial derivatives continuously act on the Zygmund spaces as $\partial^{m}: C_{\ast}^{\beta}(\mathbb{R}^{d})\mapsto C_{\ast}^{\beta-|m|}(\mathbb{R}^{d})$. Thus, if $T\in C_{\ast}^{k-s}(\mathbb{R}^{n})$ then $\partial^{m}T\in C_{\ast}^{-s}(\mathbb{R}^{d})$ for all $|m|\leq k$. We can then apply \cite[Lem. 5.3, Eq. (5.4)]{p-r-v} and conclude that $(T\ast \phi_{n})\in \mathcal{E}_{\mathbb{N}}^{k,s}(\mathbb{R}^{d})$.

Assume now that $((T\ast\phi_{n})_{|\Omega})\in \mathcal{E}^{k,s}_{\mathbb{N}}(\Omega)$. We first show that actually $(T\ast\phi_{n})\in \mathcal{E}^{k,s}_{\mathbb{N}}(\mathbb{R}^{d})$. Indeed, let $\operatorname*{supp} T\subset\omega_{1}\subset \subset\omega_{2}\subset\subset \Omega$. It suffices to prove that for each multi-index $m\in\mathbb{R}^{d}$
\begin{equation}
\label{eqextra}
\sup_{x\in\mathbb{R}^{d}\setminus \omega_{2} }\left|(\partial^{m}T\ast \phi_{n})(x)\right|=O(1), \ \ \ n>1.
\end{equation}
Let $A$ be the distance between $\overline{\omega}_{1}$ and  $\partial\omega_{2}$. Find $r$ such that  
$$
(\forall \rho\in\mathcal{E}(\mathbb{R}^{d}))(\left|\left\langle \partial^{m}T, \rho\right\rangle\right|<C\|\rho\|_{r,\omega_{1}}),
$$
where $\|\rho\|_{r,\omega_1}=\sup_{u\in \omega_{1}, |p|\leq r}|\partial^{p}\rho(u)|.$ Setting $\rho(u)=n^{d}\phi(n(x-u))$ and using the fact that $\phi$ is rapidly decreasing, we obtain,
$$
\sup_{x\in\mathbb{R}^{d}\setminus \omega_{2} }\left|(\partial^{m}T\ast \phi_{n})(x)\right|< \tilde{C}\sup_{x\in\mathbb{R}^{d}\setminus \omega_{2} }\sup_{u\in \omega_{1}} (1/n + \left|x-u\right|)^{-r-d}\leq \tilde{C}A^{-r-d},
$$
which yields (\ref{eqextra}). Next, set $g_{n}=n^{-s}(T\ast \phi_{n})$. Then, $(T\ast\phi_{n})\in \mathcal{E}^{k,s}_{\mathbb{N}}(\mathbb{R}^{d})$ precisely tells us that $(g_{n})$ is a bounded sequence in $C_{b}^{k}(\mathbb{R}^{d})$, the Banach space of $k$-times continuously differentiable functions that are globally bounded together with all their partial derivatives of order $\leq k$. Since the inclusion mapping $C_{b}^{k}(\mathbb{R}^{d})\mapsto C^{k}_{\ast}(\mathbb{R}^{d})$ is obviously continuous, we obtain that $(g_{n})$ is bounded in the Zygmund space $C^{k}_{\ast}(\mathbb{R}^{d})$. Find $\varepsilon>0$ such that $|\hat{\phi}(u)|>0$ for $|u|\leq 2\varepsilon$. Let $(\theta_{2^{j}})$ be as in (\ref{lpcond1})--(\ref{lpcond3}) (for $\alpha=k$ and this $\varepsilon$). Then, employing the norm (\ref{zeq}), there is $M>0$ such that

$$
\underset{1\leq n,\:0\leq j}{\sup_{x\in\mathbb{R}^{d}}}2^{kj}|(g_{n}\ast\theta_{2^{j}})(x)|=\underset{1\leq n,\:0\leq j}{\sup_{x\in\mathbb{R}^{d}}}2^{kj}n^{-s}|(T\ast\phi_{n}\ast\theta_{2^{j}})(x)|<M.
$$
Setting $n=2^{j}$, $\tilde{\theta}_{1}=\phi\ast\theta_{1}$ and $\tilde{\theta}=\phi\ast\theta$, and noticing that the conditions (\ref{lpcond1})--(\ref{lpcond3}), with $\alpha=k-s$, are fulfilled by $(\tilde{\theta}_{2^{j}})$, we have 
$$
\sup_{x\in\mathbb{R}^{d}, \:0\leq j}2^{(k-s)j}|(T\ast\tilde{\theta}_{2^{j}})(x)|<M,
$$
which in turn implies that $T\in C^{k-s}_{\ast}(\mathbb{R}^{d}) $. 
\end{proof}

We may reformulate Theorem \ref{m2theorem} in order to privilege the role of the Zygmund space. Corollary \ref{m2cor} gives a general form of Proposition \ref{prop1}.

\begin{corollary}
\label{m2cor} Let $T\in\mathcal{E}'(\Omega)$, $\alpha\in\mathbb{R}$, and let $(T\ast\phi_{n})$ be a regularization sequence. If $k\in\mathbb{N}$ is such that $k>\alpha$, then 
$$T\in C_{\ast}^{\alpha}(\mathbb{R}^{d})\Leftrightarrow ((T\ast\phi_{n})_{|\Omega})\in \mathcal{E}^{k,k-\alpha}_{\mathbb{N}}(\Omega) .$$
\end{corollary}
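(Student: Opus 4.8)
The plan is to obtain Corollary \ref{m2cor} as a direct restatement of Theorem \ref{m2theorem}, after splitting into the cases $\alpha>0$ and $\alpha\leq 0$. The only mismatch between the two statements is cosmetic: Theorem \ref{m2theorem} is phrased with a growth parameter $s>0$ and yields the Zygmund exponent $k-s$, whereas the corollary is phrased with the target exponent $\alpha<k$ and the growth parameter $k-\alpha$.

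First I would treat the generic case $\alpha>0$. Here we simply set $s=k-\alpha$. Since $k>\alpha$ we have $s>0$, so Theorem \ref{m2theorem} applies verbatim and gives
\[
((T\ast\phi_{n})_{|\Omega})\in \mathcal{E}^{k,s}_{\mathbb{N}}(\Omega)\Leftrightarrow T\in C^{k-s}_{\ast}(\mathbb{R}^{d}),
\]
which is exactly the claimed equivalence with $k-s=\alpha$. Nothing more is needed in this range.

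Next I would handle $\alpha\leq 0$. Now $s=k-\alpha\geq k>0$, but the subtlety is that Theorem \ref{m2theorem} as stated already covers all $s>0$, hence in particular this range; the value of $\alpha=k-s$ it produces is permitted to be nonpositive because the Zygmund spaces $C^{\beta}_{\ast}(\mathbb{R}^{d})$ in Section \ref{h-zspaces} are defined for every $\beta\in\mathbb{R}$ (and the derivative action $\partial^{m}\colon C^{\beta}_{\ast}\to C^{\beta-|m|}_{\ast}$ and the Littlewood--Paley description \eqref{zeq} used in its proof are likewise valid for all real exponents, with the moment conditions in \eqref{lpcond2} dropped when the exponent is negative). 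So the same substitution $s=k-\alpha$ works, and the equivalence follows again directly from Theorem \ref{m2theorem}.

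The only point that needs a word of care — and the closest thing to an obstacle — is to make sure the proof of Theorem \ref{m2theorem} does not secretly use $k-s>0$; a glance shows it does not, since it only invokes the mapping property of $\partial^{m}$ on Zygmund spaces, the cited estimate \cite[Lem. 5.3, Eq. (5.4)]{p-r-v}, and the change of variables $n=2^{j}$ together with $\tilde\theta_{1}=\phi\ast\theta_{1}$, $\tilde\theta=\phi\ast\theta$, all of which are insensitive to the sign of the exponent. Hence the corollary is immediate.

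\begin{proof}
Set $s=k-\alpha$. Since $k>\alpha$, we have $s>0$, so Theorem \ref{m2theorem} applies and gives
$((T\ast\phi_{n})_{|\Omega})\in \mathcal{E}^{k,s}_{\mathbb{N}}(\Omega)$ if and only if $T\in C^{k-s}_{\ast}(\mathbb{R}^{d})=C^{\alpha}_{\ast}(\mathbb{R}^{d})$, which is precisely the asserted equivalence. Note that this argument is valid for every $\alpha\in\mathbb{R}$, as the Zygmund spaces $C^{\beta}_{\ast}(\mathbb{R}^{d})$ are defined for all real $\beta$.
\end{proof}
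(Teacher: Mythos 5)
Your proposal is correct and is essentially the paper's own (implicit) argument: the corollary is just Theorem \ref{m2theorem} with the substitution $s=k-\alpha$, which is legitimate precisely because $k>\alpha$ gives $s>0$. The case split $\alpha>0$ versus $\alpha\leq 0$ is superfluous, since the theorem never requires $k-s>0$, as you yourself note.
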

We end this subsection with two remarks.
\begin{remark}
\label{remark1} The proof of Theorem \ref{m2theorem} can be adapted to show the following characterization of the global Zygmund spaces. For a distribution $T\in\mathcal{S}'(\mathbb{R}^{d})$, one has that $f\in C^{\alpha}_{\ast}(\mathbb{R}^{d})$ if and only if, given a $k>\alpha$,
$$
(\forall m \in {\mathbb
N}^d,|m|\leq k)(\sup_{x\in\mathbb{R}^{d}}|\partial^{m}(T\ast\phi_n)(x)|=O(n^{
k-\alpha})).
$$
We leave to the reader the details of such a straightforward modification in the proof of Theorem \ref{m2theorem}. The result just stated improves a theorem of H\"{o}rmann (formulated in \cite{her} by using the language of generalized function algebras).
\end{remark}
\begin{remark}
\label{remark2} One can give a version of Corollary \ref{m2cor} that is valid for all distributions $T\in\mathcal{D}'(\Omega)$. Indeed, by using a partition of the unity, one can construct \cite[Sec. 1.2.2]{gkos} regularization sequences $(T_{n})$ for any distribution $T\in\mathcal{D}'(\Omega)$ such that if $T\in\mathcal{E}'(\Omega)$ one has $(T_{n}-T\ast\phi_{n})\in \mathcal{E}^{\infty,-1}_{\mathbb{N}}(\Omega)$. Thus, given $k>\alpha$, we obtain $T\in C^{\alpha}_{\ast,loc}(\Omega)$ if and only if $(T_{n})\in \mathcal{E}_{\mathbb{N}}^{k,k-\alpha}(\Omega)$.
\end{remark}

\subsection{Characterization of smoothness}\label{regs} We turn our attention to $C^{\infty}$ regularity,
we now provide a criterion of smoothness for distributions. Observe that we already presented a necessary and sufficient condition for smoothness in Corollary \ref{ic1}, that was done in terms of the regularization sequence $(T\ast\phi_{n})$. It turns out that one can employ more general approximation sequences and achieve the same result. The next theorem was originally obtained in \cite{ps}, and extends an earlier result of Oberguggenberger (given within Colombeau theory in \cite{o}). Here we give a new proof based on Theorem \ref{m2theorem}.

\begin{theorem} \label{regas}
 Let $T\in\mathcal D'(\Omega)$ and let $(f_n)$ be a sequence of $C^{\infty}$ functions on $\Omega$ associated to it. Assume that
\begin{equation}\label{reg}(\forall \omega\subset\subset \Omega)(\exists s>0)(\forall m\in \mathbb N^d)
( \sup_{x\in \omega}|\partial^{m}f_n(x)|=O(n^{s})),
\end{equation}
and $(f_{n})$ approximates $T$ with convergence rate:
\begin{equation}
\label{req3}
(\exists b>0)(T-f_{n}=O(n^{-b}) \mbox{ in }\mathcal{D}'(\Omega)).
\end{equation} 
Then $f\in C^\infty(\Omega).$
\end{theorem}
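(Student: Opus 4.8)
The plan is to localize the problem and then apply Theorem \ref{m2theorem} (in the form of Corollary \ref{m2cor}) to a suitable modification of the given sequence $(f_{n})$. The issue with applying the previous results directly is twofold: first, $(f_{n})$ is an \emph{arbitrary} associated sequence, not a regularization sequence $(T\ast\phi_{n})$; second, the growth exponent $s$ in \eqref{reg} is a priori large, so it only gives $C^{k-s}_{\ast}$ regularity for each fixed $k$, and since $s$ is fixed while $k$ is allowed to grow, this already yields arbitrarily high H\"older regularity — hence smoothness. The crux is therefore to pass from the arbitrary sequence $(f_{n})$ to the canonical regularization sequence $(T\ast\phi_{n})$, \emph{using the convergence rate} \eqref{req3} to control the difference.

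First I would reduce to $T\in\mathcal{E}'(\Omega)$: fix $\omega\subset\subset\Omega$ and $\rho\in\mathcal{D}(\Omega)$ with $\rho\equiv 1$ on a neighborhood of $\overline{\omega}$; it suffices to prove $\rho T\in C^{\infty}$ near $\omega$, and $(\rho f_{n})$ is a sequence of smooth functions associated to $\rho T\in\mathcal{E}'(\Omega)$ which inherits both \eqref{reg} (with possibly a larger $s$, from the Leibniz rule) and \eqref{req3}. So assume $T\in\mathcal{E}'(\Omega)$ with $\operatorname{supp}T\subset\omega_{0}\subset\subset\Omega$. Now consider the regularization $T\ast\phi_{n}$ and write
\begin{equation*}
T\ast\phi_{n}=f_{n}\ast\phi_{n}+(T-f_{n})\ast\phi_{n}.
\end{equation*}
For the first term, the hypothesis \eqref{reg} on $(f_{n})$ together with standard estimates for convolution with the mollifier $\phi_{n}$ (Young-type inequalities, using that $\|\phi_{n}\|_{L^{1}}=\|\phi\|_{L^{1}}$ and $\|\partial^{p}\phi_{n}\|_{L^{1}}=n^{|p|}\|\partial^{p}\phi\|_{L^{1}}$) shows that $(f_{n}\ast\phi_{n})$ still satisfies a growth bound of class $(k,s+C_{k})$ for every $k$, on a slightly smaller open set; one must be mildly careful because $f_{n}$ is only controlled on compact subsets of $\Omega$, but since $\operatorname{supp}\phi_{n}$ shrinks and $\operatorname{supp}T$ is well inside $\Omega$, the relevant values of $f_{n}$ lie in a fixed $\omega\subset\subset\Omega$ for $n$ large. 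For the second term, \eqref{req3} applied to the test function $x\mapsto n^{d}\phi(n(x-\,\cdot\,))$, exactly as in the proof of Theorem \ref{m2theorem}, gives $\sup_{x}|\partial^{m}((T-f_{n})\ast\phi_{n})(x)|=O(n^{N-b})$ for some $N=N(m)$ coming from the order of the distribution $T-f_{n}$; here the subtlety is that the constant in \eqref{eqnet2} is not assumed uniform over the seminorms, but since $T\in\mathcal{E}'(\Omega)$ has finite order and each $f_{n}$ is smooth, for a \emph{fixed} $m$ the quantity $\langle \partial^{m}(T-f_{n}),\rho\rangle$ is $O(n^{-b})$ times a seminorm of $\rho$ of bounded order. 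Combining, $((T\ast\phi_{n})_{|\Omega})\in\mathcal{E}^{k,s_{k}}_{\mathbb{N}}(\Omega)$ for each $k$ and some $s_{k}>0$.

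Then I would invoke Corollary \ref{m2cor} (equivalently Theorem \ref{m2theorem}): for each fixed $k$, $((T\ast\phi_{n})_{|\Omega})\in\mathcal{E}^{k,s_{k}}_{\mathbb{N}}(\Omega)$ forces $T\in C^{k-s_{k}}_{\ast}(\mathbb{R}^{d})$. Now apply this with $k$ arbitrarily large; since for each $k$ the deficit $s_{k}$ is finite (it grows at most linearly in $k$, but any fixed finite value suffices), we get $T\in C^{\beta}_{\ast}(\mathbb{R}^{d})$ for $\beta=k-s_{k}\to\infty$, hence $T\in\bigcap_{\beta>0}C^{\beta}_{\ast}(\mathbb{R}^{d})=C^{\infty}(\mathbb{R}^{d})$ on a neighborhood of $\overline{\omega}_{0}$; in particular $T$ is smooth near $\omega$. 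As $\omega\subset\subset\Omega$ was arbitrary, $T\in C^{\infty}(\Omega)$.

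The main obstacle I anticipate is not any single estimate but the bookkeeping of the two competing exponents: one must check that the growth exponent of $(T\ast\phi_{n})$ at order $k$ stays finite (it is a sum of $s+C_{k}$ from the smooth part and $N(m)-b$ from the rate-of-convergence part, both finite for fixed $k$), so that $k-s_{k}\to\infty$ as $k\to\infty$. A secondary technical point is justifying that the bound \eqref{req3}, stated only as $\langle T-f_{n},\rho\rangle=O(n^{-b})$ for each individual $\rho$, can be upgraded — for the purpose of estimating $\partial^{m}((T-f_{n})\ast\phi_{n})$ with $m$ fixed — to a bound with the $O$-constant times a single fixed seminorm of $\rho$; this follows by a uniform boundedness / Banach--Steinhaus argument applied to the family $\{n^{b}(T-f_{n})\}$, which is weakly bounded in $\mathcal{D}'(\Omega)$ and hence equicontinuous on the relevant Fréchet space, combined with the fact that $T-f_{n}$ restricted to a fixed compact neighborhood of $\operatorname{supp}T$ has uniformly bounded order.
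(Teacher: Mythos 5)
There is a genuine gap, and it sits exactly at the point you flagged as ``bookkeeping.'' Your decomposition couples the mollifier index to the sequence index: you write $T\ast\phi_{n}=f_{n}\ast\phi_{n}+(T-f_{n})\ast\phi_{n}$ and estimate the second term via the Banach--Steinhaus upgrade of (\ref{req3}), which gives
\begin{equation*}
\left|\partial^{m}\bigl((T-f_{n})\ast\phi_{n}\bigr)(x)\right|=\left|\left\langle T-f_{n},\partial^{m}\left[\phi_{n}(x-\cdot)\right]\right\rangle\right|\leq C n^{-b}\,n^{\,d+|m|+r},
\end{equation*}
i.e.\ growth of order $n^{d+k+r-b}$ for $|m|\leq k$. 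Hence your deficit is $s_{k}\geq d+k+r-b$, which grows linearly in $k$ \emph{with slope $1$}, so $k-s_{k}\leq b-d-r$ stays bounded (and can even be negative, since $b$ may be small and $r$ is dictated by the order of the family $\{n^{b}(T-f_{n})\}$). Your remark that $s_{k}$ ``grows at most linearly in $k$, but any fixed finite value suffices'' is where the argument breaks: what Corollary \ref{m2cor} needs is $k-s_{k}\to\infty$, and with the same-index splitting you only obtain $T\in C^{b-d-r}_{\ast}(\mathbb{R}^{d})$, nowhere near smoothness. The first term is not the problem (indeed, putting the derivatives on $f_{n}$ rather than on $\phi_{n}$ gives class $(k,s)$ with $s$ independent of $k$, better than your $(k,s+C_{k})$); the second term is fatal as long as the two indices are tied together.

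The fix — and this is the key idea of the paper's proof, which your plan otherwise mirrors (localization to $T\in\mathcal{E}'(\Omega)$, the Banach--Steinhaus estimate (\ref{conclusion1}), and the appeal to Theorem \ref{m2theorem}/Corollary \ref{m2cor}) — is to decouple the two indices: estimate $T\ast\phi_{\nu}=f_{n}\ast\phi_{\nu}+(T-f_{n})\ast\phi_{\nu}$ with $n$ and $\nu$ independent, obtaining
\begin{equation*}
\sup_{x\in\omega,\;|m|\leq k}\left|\partial^{m}(T\ast\phi_{\nu})(x)\right|\leq C_{1}n^{s}+C_{2}n^{-b}\nu^{\,d+r+k},
\end{equation*}
and then optimize $n$ as a function of $\nu$ and $k$: with $\eta$ chosen so that $\eta s/b<1/2$ and $n=[\nu^{k\eta/b}]+1$, the first term is $O(\nu^{k/2})$ and the second is $O(\nu^{\,k-(\eta k-d-r)})$. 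Taking $k$ large enough that $\alpha<\min\{k/2,\eta k-d-r\}$ yields $((T\ast\phi_{\nu})_{|\omega})\in\mathcal{E}^{k,k-\alpha}_{\mathbb{N}}(\omega)$ for an \emph{arbitrary} $\alpha$, and Corollary \ref{m2cor} then gives $T\in C^{\alpha}_{\ast}(\mathbb{R}^{d})$ for all $\alpha$, hence smoothness. Without this decoupling and the $k$-dependent choice of $n$, your scheme cannot reach arbitrarily high Zygmund regularity.
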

\begin{proof}
Since (\ref{reg}) and the conclusion of Theorem \ref{regas} are local statements, we may assume that $T\in\mathcal{E}'(\Omega)$ and there exists an open subset $\omega \subset \subset \Omega$ such that
\begin{equation} \label{cond}
\mathop{\rm supp} T, \mathop{\rm supp} f_n \subset
\omega,\ n\in\mathbb{N}.
\end{equation}
We will show that $T\in\mathcal{D}(\Omega)$. Our assumption now becomes $(f_n)\in \mathcal{E}^{\infty,s}_{\mathbb{N}}(\Omega)$ for some $s>0$. The support condition (\ref{cond}), the rate of convergence (\ref{req3}), and the equivalence between weak and strong boundedness on $\mathcal{E}'(\Omega)$ (Banach-Steinhaus theorem) yield
\begin{equation}
\label{conclusion1}(\exists r\in \mathbb{N})(\exists C>0)
(\forall \rho\in \mathcal{E}(\Omega))(\forall n\geq 1)(
|\langle T-f_n , \rho \rangle|\leq Cn^{-b}
\|\rho\|_r),
\end{equation}
where $\|\rho\|_r=\sup_{u\in \Omega, |p|\leq r}|\partial^{p}\rho(u)|. $ Let $\alpha$ be an arbitrary positive number. We consider the test function $\phi\in\mathcal{S}(\mathbb{R}^{d})$, recall that we assume $\int_{\mathbb{R}^{d}}\phi(x)dx=1$. Then, by (\ref{reg}) and (\ref{conclusion1}), given any $k\in\mathbb{N}$, we can find positive constants $C_1$ and $C_2$ (depending only on $k$ and $\phi$) such that
$$
\sup_{x\in\omega,\left|m\right|\leq k}\left|\partial^{m}(T\ast\phi_{\nu})(x)\right|\leq C_{1}n^{s}+C_{2}n^{-b}\nu^{d+r+k}, \ \ \  \nu, n\in\mathbb{Z}_{+}.
$$
Find $\eta>0$ such that $\eta s/b<1/2$. Setting $n=[\nu^{k\eta/b}]+1$, we obtain
$$
\sup_{x\in\omega,\left|m\right|\leq k}\left|\partial^{m}(T\ast\phi_{\nu})(x)\right|\leq C_{1}(\nu+1)^{k-k/2}+C_{2}\nu^{k-(\eta k-d-r)}, \ \ \ \nu\in\mathbb{Z}_{+}.
$$
We can now choose $k$ such that $\alpha<\min \left\{k/2,\eta k-d-r\right\}$. The conclusion from the previous estimate is then that $((T\ast \phi_{\nu})_{|\omega})\in \mathcal{E}_{\mathbb{N}}^{k,k-\alpha}(\omega)$, and hence, by Corollary \ref{m2cor}, $T\in C^{\alpha}_{\ast}(\mathbb{R}^{d})$. Since $\alpha$ was arbitrary, it follows that $T\in C^{\infty}(\mathbb{R}^{d})$.
\end{proof}

\subsection{Other sufficient conditions for regularity}\label{otherreg}
The next theorem is directly motivated by Proposition \ref{rpropm}. We relax the growth constrains in (\ref{req1}), and, by requesting an appropriate rate of convergence, we obtain a sufficient condition for the regularity of the distribution.
 \begin{theorem}\label{mtheorem}
 Let $T\in\mathcal{D}'(\Omega)$ and let $(f_n)$ be a sequence of $C^{k}$ functions on $\Omega$ that is associated to it. Assume that either of the following pair of conditions holds:
\begin{enumerate}
\item[\textnormal{(i)}] $(f_{n})\in \mathcal{E}_{\mathbb{N}}^{k,a}(\Omega)$, $\forall a>0$, namely,
\begin{equation}
\label{req2}
(\forall a>0)(\forall \omega\subset \subset \Omega)(\forall m \in {\mathbb
N}^d,|m|\leq k)(\sup_{x\in\omega}|\partial^{m}f_n(x)|=O(n^{a})),
\end{equation}
and the convergence rate of  $(f_{n})$ to $T$ is as in (\ref{req3}).
 
\item [\textnormal{(ii)}] $(f_{n})\in \mathcal{E}_{\mathbb{N}}^{k,s}(\Omega)$ for some $s>0$, and there is a rapidly decreasing function $R:\mathbb{N}\to\mathbb{R}_{+}$, i.e., $(\forall a>0,\:R(n)=O(n^{-a}))$, such that
\begin{equation}
\label{req4}
T-f_{n}=O(R(n))\  \mbox{ in }\mathcal{D}'(\Omega).
\end{equation}
\end{enumerate}
Then, $ T\in C_{*,\:loc}^{k-\eta}(\Omega)$ for every $\eta>0$. 
\end{theorem}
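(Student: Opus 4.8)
The plan is to imitate the proof of Theorem~\ref{regas}: control the honest regularization $(T\ast\phi_{\nu})$ and then invoke Corollary~\ref{m2cor}. By the very definition of $C^{k-\eta}_{\ast,loc}(\Omega)$, it is enough to prove that $\rho T\in C^{k-\eta}_{\ast}(\mathbb{R}^{d})$ for every fixed $\rho\in\mathcal{D}(\Omega)$ and every $\eta>0$. First I would observe that replacing the pair $(T,f_{n})$ by $(\rho T,\rho f_{n})$ leaves all the hypotheses intact: $(\rho f_{n})$ is again a sequence of $C^{k}$ functions associated to $\rho T$; since $\operatorname*{supp}\rho\subset\subset\Omega$, the Leibniz rule keeps $(\rho f_{n})$ in the same growth class; and $\langle\rho T-\rho f_{n},\psi\rangle=\langle T-f_{n},\rho\psi\rangle$ together with $\|\rho\psi\|_{r}\leq C_{\rho}\|\psi\|_{r}$ preserves the convergence rate. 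Hence I may assume from the outset that $T\in\mathcal{E}'(\Omega)$ and that $\operatorname*{supp}T$ and all the $\operatorname*{supp}f_{n}$ lie in a fixed compact $K\subset\subset\Omega$.

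Exactly as in (\ref{conclusion1}), the Banach--Steinhaus theorem applied to the weakly bounded family $\{\varrho(n)^{-1}(T-f_{n})\}_{n}\subset\mathcal{E}'(\Omega)$ — where $\varrho(n)=n^{-b}$ in case (i) and $\varrho(n)=R(n)$ in case (ii) — yields an $r\in\mathbb{N}$ and a $C>0$ with
\[
|\langle T-f_{n},\psi\rangle|\leq C\,\varrho(n)\,\|\psi\|_{r},\qquad \psi\in\mathcal{E}(\Omega),\ n\geq1 .
\]
Then, writing for $|m|\leq k$ and $x\in\mathbb{R}^{d}$
\[
\partial^{m}(T\ast\phi_{\nu})(x)=\langle T-f_{n},(\partial^{m}\phi_{\nu})(x-\,\cdot\,)\rangle+\big((\partial^{m}f_{n})\ast\phi_{\nu}\big)(x),
\]
I would estimate the two terms separately. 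In the second term the derivatives have been moved onto $f_{n}$, which is licit precisely because $f_{n}\in C^{k}$ and $|m|\leq k$; this is where the $C^{k}$-regularity of the approximants enters, and it is essential — leaving the derivatives on $\phi_{\nu}$ would produce an extra factor $\nu^{|m|}$ that spoils the balancing below. Since $\phi\in\mathcal{S}(\mathbb{R}^{d})$, one has $\|(\partial^{m}\phi_{\nu})(x-\,\cdot\,)\|_{r}\leq C_{\phi}\,\nu^{d+k+r}$ uniformly in $x$ and in $|m|\leq k$, while $\|(\partial^{m}f_{n})\ast\phi_{\nu}\|_{\infty}\leq\|\phi\|_{L^{1}}\sup_{K}|\partial^{m}f_{n}|$. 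Consequently there are constants $C_{1},C_{2}$, independent of $n$ and $\nu$, with
\[
\sup_{x\in\mathbb{R}^{d},\,|m|\leq k}|\partial^{m}(T\ast\phi_{\nu})(x)|\ \leq\ C_{1}\,g(n)+C_{2}\,\varrho(n)\,\nu^{d+k+r},\qquad n,\nu\geq1,
\]
where $g(n)$ is the bound furnished by the growth hypothesis: $g(n)=O(n^{a})$ with $a>0$ at our disposal in case (i), and $g(n)=O(n^{s})$ with $s$ fixed in case (ii).

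It remains to choose $n=n(\nu)$ so that the right-hand side is $O(\nu^{\eta})$; then $((T\ast\phi_{\nu})_{|\Omega})\in\mathcal{E}^{k,\eta}_{\mathbb{N}}(\Omega)$ and Corollary~\ref{m2cor} (applicable since $k>k-\eta$) gives $T\in C^{k-\eta}_{\ast}(\mathbb{R}^{d})$, which by the reduction above is the desired conclusion. In case (ii) I would take $n=[\nu^{\eta/s}]+1$, so that $C_{1}g(n)=O(\nu^{\eta})$; since $R$ is rapidly decreasing, $R(n)\leq C_{N}n^{-N}$ for every $N$ with a suitable $C_{N}$, and choosing $N$ with $N\eta/s>d+k+r$ forces the second term to tend to $0$. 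In case (i) I would take $n=[\nu^{(d+k+r)/b}]+1$, so that $C_{2}\varrho(n)\nu^{d+k+r}=O(1)$, and then — using that the growth hypothesis is available for every exponent — fix $a>0$ with $a(d+k+r)/b<\eta$, which gives $C_{1}g(n)=O(\nu^{\eta})$. I expect the balancing of $n$ against $\nu$ to be the only genuinely delicate point: the seminorm estimate unavoidably contributes the fixed polynomial factor $\nu^{d+k+r}$, and it is exactly in order to absorb it that the hypotheses must be sharpened beyond the mere boundedness of (\ref{req1}) — in case (i) by permitting arbitrarily slow polynomial growth of the $f_{n}$, and in case (ii) by demanding a rapidly decreasing rate of convergence.
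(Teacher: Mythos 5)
Your proof is correct and follows essentially the same route as the paper's: localize to the compactly supported case, apply Banach--Steinhaus to get the uniform seminorm estimate on $T-f_{n}$, split $\partial^{m}(T\ast\phi_{\nu})$ into the error term and $(\partial^{m}f_{n})\ast\phi_{\nu}$, balance $n$ against $\nu$, and conclude via Corollary \ref{m2cor}/Theorem \ref{m2theorem}. The only (harmless) deviations are your more explicit cut-off localization and a slightly different but equivalent balancing choice of $n(\nu)$ in case (ii).
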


\begin{proof} By localization, it suffices again to assume that $T\in\mathcal{E}'(\Omega)$ and there exists an open subset $\omega \subset \subset \Omega$ such that (\ref{cond}) holds. The proof is analogous to that of Theorem \ref{regas}. As usual, we use the test function $\phi\in\mathcal{S}(\mathbb{R}^{d})$ with $\int_{\mathbb{R}^{d}}\phi(x)dx=1$.

(i) 
In view of the Banach-Steinhaus theorem, the conditions (\ref{req3}) and (\ref{cond}) imply (\ref{conclusion1}). Thus,
with $C_2=C\sup_{u\in\mathbb{R}^{n},\left|p\right|\leq r}|\partial^{p}\phi(u)|$,
\begin{align*}
\sup_{x\in \omega, |m|\leq k}|\partial^{m}(T\ast \phi_{\nu})(x)|&\leq  C_{2}n^{-b}\nu^{d+r+k}+\|f_{n}\ast\phi_{\nu}\|_{k},
\\
&
\leq
C_{2}n^{-b}\nu^{d+r+k}+\left\|\phi\right\|_{L^{1}(\mathbb{R}^{d})}\|f_{n}\|_{k},
\
\ \ \ n,\nu\in\mathbb{Z}_{+}.
\end{align*}
By (\ref{req2}), given any $a>0$, there exists $M=M_{a}>0$ such that
$$
\sup_{x\in \omega, |m|\leq k}|\partial^{m}(T\ast \phi_{\nu})(x)|\leq  C_{2}n^{-b}\nu^{d+r+k}+Mn^{a},
\
\ \ \ n,\nu\in\mathbb{Z}_{+}.
$$
By taking $n=[\nu^{(k+r+d)/b}]+1$, it follows that
$$\sup_{x\in \omega, |m|\leq k}|\partial^{m}(T\ast \phi_{\nu})(x)|\leq  C_{2}+M (\nu+1)^{a(k+r+d)/b},\ \ \ \nu\in\mathbb{Z}_{+}.
$$
If we take sufficiently small $a$, we conclude that $(T\ast \phi_{n})\in \mathcal{E}_{\mathbb{N}}^{k,\eta}(\omega)$ for all $\eta>0$, and the assertion follows at once from Theorem \ref{m2theorem}.

(ii) The relation (\ref{req4}), the fact that $R$ is rapidly decreasing, and the Banach-Steinhaus theorem imply 
$$
(\exists r \in \mathbb{N})(\forall a>0) (\forall \rho \in
\mathcal{E}^r(\Omega)) (|\langle T-f_n, \rho
\rangle|=O(n^{-a})).
$$
As in part (i),
we have
$$\sup_{x\in \omega, |m|\leq k}|\partial^{m}(T\ast \phi_{\nu})(x)|\leq  Cn^{-a}\nu^{d+r+k}+ \left\|\phi\right\|_{L^{1}(\mathbb{R}^{d})}\|f_{n}\|_{k},\ \ \ n,\nu\in\mathbb{Z}_{+}.
$$
for some constant $C=C_{a}$. Since
$(f_n)\in \mathcal{E}^{k,s}_{\mathbb{N}}(\Omega)$, there is another constant $C=C_{a,s,\phi}>0$ such that
$$
\sup_{x\in \omega, |m|\leq k}|\partial^{m}(T\ast \phi_{\nu})(x)|\leq  Cn^{-a}\nu^{d+r+k}+Cn^{s},\ \ \ n,\nu\in\mathbb{Z}_{+}.
$$
Setting $n=[\varepsilon^{(k+r+d)/a}]+1$, we have
\begin{equation*} 
\sup_{x\in \omega, |m|\leq k}|\partial^{m}(T\ast \phi_{\nu})(x)|\leq  C+C(\nu+1)^{s(k+r+d)/a},\ \ \ \nu\in\mathbb{Z}_{+}.
\end{equation*}
Thus, taking large enough $a>0$, one establishes $T\in\mathcal{E}^{k,\eta}_{\mathbb{N}}(\omega)$ for all $\eta>0$. The conclusion $T\in C_{\ast}^{k-\eta}(\mathbb{R}^{d})$ follows once again from Theorem \ref{m2theorem}.
\end{proof}

We conclude this article with several comments about Theorem \ref{mtheorem}.

The hypotheses (\ref{req3}) and (\ref{req4}) are essential ingredients in Theorem \ref{mtheorem}. The next two examples illustrate the fact that none of them can be omitted.

\begin{example}
\label{rex3} Consider $(f_{n})=(\phi_{\log n})$, i.e., the sequence given by $f_{n}(x)= (\log n)^{d}\phi(x\log n)$. Clearly, $(f_{n})\in \mathcal{E}^{\infty,s}_{\mathbb{N}}(\mathbb{R}^{d})$, $\forall s>0$. Moreover, this net is associated to $\delta$, the Dirac delta distribution. What makes fail the conclusion of Theorem \ref{mtheorem} in this example is the fact that the rate of convergence of $(f_{n})$ is too slow: if the rate of convergence where slightly faster, as in (\ref{req3}), we would be able to conclude that $T$ is a smooth function! 
\end{example}

\begin{example}
\label{rex4}
Let $T\in\mathcal{E}'(\mathbb{R}^{d})$ and $s>0$. Suppose that $T\in C^{k-s}_{\ast}(\mathbb{R}^{d})$ but $T\notin C^{k-s/2}_{\ast}(\mathbb{R}^{d})$. By Theorem \ref{m2theorem}, $(T\ast \phi_{n})\in \mathcal{E}_{\mathbb{N}}^{k,s}(\mathbb{R}^{d})$. However, the conclusion of Theorem \ref{mtheorem} fails for $T$. In this case, the approximation rate is much slower than (\ref{req4}), even if one assumes vanishing of the higher order moments of $\phi$ (cf. Example \ref{ex1}).  
\end{example}

When $T\in\mathcal{E}'(\Omega)$, we may employ in part (i) of Theorem \ref{mtheorem} the regularization sequence $f_{n}=(T\ast \phi_{n})_{|\Omega}$; however, for this case it is better to apply Theorem \ref{m2theorem}, because it provides the optimal regularity conclusion.


 \end{document}